\numberwithin{equation}{section}
\theoremstyle{plain}
\newtheorem{lemma}[equation]{Lemma}
\theoremstyle{definition}
\newtheorem{definition}[equation]{Definition}
\theoremstyle{remark}
\newtheorem{Example}[equation]{Example}
\newtheorem{remark}[equation]{Remark}
\newenvironment{example}{\begin{Example}\pushQED{\qee}}{\popQED\end{Example}}
\DeclareRobustCommand{\qee}{%
  \ifmmode \mathqee
  \else
    \leavevmode\unskip\penalty9999 \hbox{}\nobreak\hfill
    \quad\hbox{\qeesymbol}%
  \fi
}
\newcommand{\mathqee}{\quad\hbox{\qeesymbol}}
\newcommand{\qeesymbol}{\ensuremath\diamondsuit}
\newcommand{\Lie}[1]{\operatorname{\textsl{#1}}}
\newcommand{\lie}[1]{\operatorname{\mathfrak{#1}}}
\newcommand{\bmf}{\lie b}
\newcommand{\kf}{\lie k}
\newcommand{\n}{\lie n}
\newcommand{\SL}{\Lie{SL}}
\newcommand{\SU}{\Lie{SU}}
\newcommand{\Un}{\Lie{U}}
\newcommand{\C}{{\mathbb C}}
\newcommand{\HH}{{\mathbb H}}
\newcommand{\cO}{\mathcal O}
\newcommand{\tH}{\tilde H}
\newcommand{\symp}{{\sslash}} 
\newcommand{\hkq}{{\sslash\mkern-6mu/}}
\DeclareMathOperator{\Hom}{Hom}
\DeclareMathOperator{\im}{im}
\begin{document}

\title{A multiplicative analogue of complex symplectic implosion}

\author[A.~Dancer]{Andrew Dancer}
\address[Dancer]{Jesus College\\
Oxford\\
OX1 3DW\\
United Kingdom} \email{dancer@maths.ox.ac.uk}

\author[F.~Kirwan]{Frances Kirwan}
\address[Kirwan]{Balliol College\\
Oxford\\
OX1 3BJ\\
United Kingdom} \email{kirwan@maths.ox.ac.uk}

\subjclass[2000]{53C26, 53D20, 14L24}

\begin{abstract}
We introduce a multiplicative version of complex-symplectic
implosion in the case of $SL(n, \C)$.
 The universal multiplicative implosion for $SL(n, \C)$ 
is an affine variety and 
can be viewed as a nonreductive
geometric invariant theory quotient. It carries a torus action.
and reductions by this action give the Steinberg fibres of
$SL(n, \C)$. We also explain how the real symplectic group-valued universal
implosion introduced by Hurtubise, Jeffrey and Sjamaar may be
identified inside this space.
\end{abstract}

\maketitle

\section{Introduction}

In a series of papers \cite{DKS,DKS-Seshadri,DKS-twistor,
DKS-Arb,DKR} we
investigated the notion of a universal hyperk\"ahler implosion for a 
compact group $K$,
by analogy with the universal symplectic implosion of \cite{GJS:implosion}.

We recall that the universal symplectic implosion of $K$ is a space 
$(T^*K)_{\rm impl}$ with a Hamiltonian $K \times T$ action (where $T$ is
a maximal torus of $K$), such that
the reductions by $T$ at points in the closed positive Weyl
chamber give the coadjoint orbits of $K$. These orbits are the reductions
of $T^*K$ by the right $K$ factor in the $K \times K$ action on $T^*K$.
The implosion of a general symplectic manifold $X$ with Hamiltonian $K$-action
is obtained by reducing $X \times (T^*K)_{\rm impl}$ by the diagonal $K$ action,
producing a space $X_{\rm impl}$ with $T$ action. The reduction of $X$ by $K$, at any element $\xi$ of a chosen positive Weyl chamber in the dual $\kf^*$ of the Lie algebra of $K$, coincides with the reduction of $X_{\rm impl}$ by $T$ at $\xi$.
In this sense the implosion abelianises the $K$ action on $X$.

In \cite{DKS} we constructed an analogue of implosion for hyperk\"ahler geometry when $K=\SU(n)$.
As a stratified complex-symplectic space
 the universal hyperk\"ahler implosion is
the geometric invariant theory (GIT) quotient
\( (K_\C \times \n^0) \symp N \) where \( N \) is a 
maximal unipotent subgroup of the complexified group \( K_\C \), and
\( \n^0 \) is the annihilator in \( \kf_{\C}^* \) of the Lie algebra
\( \n \) of $N$. The implosion is thus the complex-symplectic quotient, 
in the GIT sense, of
\( T^* K_\C \) by \( N \), just as the symplectic implosion is (as
explained in \cite{GJS:implosion})
the GIT quotient of \( K_\C \) by \( N \). Note that $N$ is nonreductive, so
some work is needed to show that  the quotient exists as an affine variety.
This was shown in the case $K=SU(n)$ in \cite{DKS} and in general
follows from results of Ginzburg-Riche \cite{GinzburgRiche}
(see the discussion in \cite{DKS-Arb}). 

The universal hyperk\"ahler implosion 
carries an action of \( K_\C \times T_\C \) where  \( T \)
is the standard maximal torus of $K$. The presence of
non-semisimple elements in $\kf_\C$, and the fact that non-semisimple
orbits are not closed, means that the abelianisation picture
becomes more complicated than in the real symplectic case.
The complex-symplectic quotients
by the torus action are now
the Kostant varieties; that is, the varieties in \( \kf_{\C}^* \)
obtained by fixing the values of the invariant polynomials for this
Lie algebra \cite{Chriss-G:representation, Kostant:polynomial}.  
The Kostant varieties are unions of complex coadjoint orbits.
The smooth locus of a Kostant variety may be identified with the 
corresponding regular orbit,
 which is open and dense in the Kostant variety with complement
of codimension at least 2.

In \cite{DKS} we considered the case when \( K=\SU(n) \).
In this situation the universal hyperk\"ahler implosion can be identified with 
a hyperk\"ahler quotient using quiver diagrams, and thus
can be seen to be genuinely a stratified hyperk\"ahler space rather than just a 
complex-symplectic one. One may, by analogy with the symplectic case, then
implode a general space with hyperk\"ahler $SU(n)$ action by taking its
product with the universal implosion and performing the hyperk\"ahler
reduction by the diagonal $K$ action.
For general compact groups a direct construction
of a hyperk\"ahler metric on the nonreductive quotient 
\( (K_\C \times \n^0) \symp N \) is not yet available, although in \cite{DKR} we
gave an alternative approach to hyperk\"ahler implosion
via moduli spaces of solutions to Nahm's equations.

Several authors have explored multiplicative quiver diagrams, for
example \cite{Boalch2,CBS,Y}. In this paper we
consider multiplicative analogues of the quiver spaces considered in
\cite{DKS} and obtain an analogue of the universal
hyperk\"ahler implosion for $K=\SU(n)$ in the
quasi-Hamiltonian setting. We obtain a moduli space of solutions to
multiplicative quiver equations that may be identified with the
nonreductive quotient $(SL(n,\C) \times B) \symp N$ where $B$ is the
standard Borel subgroup of $SL(n, \C)$.  This quotient space admits
actions of $K_\C$ and of the torus $T_\C$. The reductions by the torus
action give the Steinberg fibres that are the multiplicative version
of the Kostant varieties. We also show how the real symplectic universal
group-valued implosion of \cite{HJS} may be identified with a
stratified set sitting inside our complex space.

The geometric quotient
 $G \times_U P$, where $G$ is a complex reductive group and $P$
is a parabolic subgroup with unipotent radical $U$,
 also arises in the work of Boalch \cite{Boalch1} as a quasi-Hamiltonian space
for the action of $G \times L$ where $L$ is the Levi factor of $P$
(see also \cite{EL} for some related ideas).
In particular, if $P$ is a Borel subgroup, Boalch obtains the geometric
quotient $G \times_N B$ as a quasi-Hamiltonian $G \times T_\C$ space.
If $G= SL(n, \C)$, we obtain this space as a quasi-affine variety
inside the implosion $(SL(n, \C) \times B) \symp N$ (see the discussion at the 
end of \S 3 and before Definition 4.1).

Pavel Safronov has recently informed us of his work \cite{Saf} which constructs versions of
implosion, as stacks, for general complex semisimple groups in both the Hamiltonian and quasi-Hamiltonian
settings.
 
\subsubsection*{Acknowledgements.} 
We thank Philip Boalch, Kevin McGerty 
and Pavel Safronov for valuable conversations.

\section{Hyperk\"ahler quiver diagrams}
\label{sec:hyperk-quiv-diagr}

Let us recall the 
finite-dimensional approach via quiver diagrams used to
construct the universal hyperk\"ahler implosion for
$K= SU(n)$ in \cite{DKS}.
We started with the
flat hyperk\"ahler space
\begin{equation}
  \label{eq:Mn}
  M = M(\mathbf n) = \bigoplus_{i=1}^{r-1} \HH^{n_i n_{i+1}} = 
  \bigoplus_{i=1}^{r-1} \Hom(\C^{n_i},\C^{n_{i+1}}) \oplus
  \Hom(\C^{n_{i+1}},\C^{n_i})
\end{equation}
with the hyperk\"ahler action of \( \Un(n_1) \times \dots \times
\Un(n_r) \)
\begin{equation*}
  \alpha_i \mapsto g_{i+1} \alpha_i g_i^{-1},\quad
  \beta_i \mapsto g_i \beta_i g_{i+1}^{-1} \qquad (i=1,\dots r-1),
\end{equation*}
with \( g_i \in \Un(n_i) \) for \( i=1, \dots, r \).
Here ${\bf n}$ is the dimension vector $(n_1, \ldots, n_r=n)$  

We took the hyperk\"ahler quotient of \( M(\mathbf n) \) by
the group \( H = \prod_{i=1}^{r-1}\SU(n_i) \), obtaining a stratified
hyperk\"ahler space \( Q = M \hkq H \), with a residual action of
the torus \( T^{r-1} = \tH / H \) where \( \tH = \prod_{i=1}^{r-1}\Un(n_i) \), 
as well as a commuting action of \( \SU(n_r) = \SU(n) \). 

  The \emph{universal hyperk\"ahler implosion for \( \SU(n) \)} was defined
to  be the hyperk\"ahler quotient \( Q = M \hkq H \), where \( M \), \(
  H \) are as above with $r=n$ and \( n_j = j \), for \( j=1, \dots, n
  \), (i.e.\ the case of a {\em full flag quiver}).

As a complex-symplectic space, \( Q \) (for general dimension vector)
is  the GIT quotient, by the 
complexification
\begin{equation*}
  H_\C  = \prod_{i=1}^{r-1}\SL(n_i,\C)
\end{equation*}
of \( H \), of the zero locus of the complex moment map \( \mu_{\C} \) for the
\( H \) action. 

The complex moment map equation \( \mu_\C =0 \) is equivalent to the
equations
\begin{equation}
  \label{eq:mmcomplex}
   \beta_{i+1} \alpha_{i+1} - \alpha_{i}\beta_{i}  = \lambda^\C_{i+1} I \qquad
  (i=0,\dots,r-2),
\end{equation}
for (free) complex scalars \( \lambda^\C_1,\dots,\lambda^\C_{r-1} \).

These equations are invariant under the
 action of \( H_\C \) given by
\begin{gather*}
  \alpha_i \mapsto g_{i+1} \alpha_i g_i^{-1}, \quad \beta_i \mapsto
  g_i \beta_i g_{i+1}^{-1} \qquad (i=1,\dots r-2),\\
  \alpha_{r-1} \mapsto \alpha_{r-1} g_{r-1}^{-1}, \quad \beta_{r-1}
  \mapsto g_{r-1} \beta_{r-1},
\end{gather*}
where \( g_i \in \SL(n_i,\C) \).  

The action of \( \SL(n,\C) =
\SL(n_r,\C) \) on the quotient \( Q \) is given by
\begin{equation*}
  \alpha_{r-1} \mapsto g_r \alpha_{r-1}, \quad
  \beta_{r-1} \mapsto \beta_{r-1} g_r^{-1}.
\end{equation*}

There is also a residual action of \( \tH_\C/H_\C \) which we can
identify, in the full flag case, with the maximal torus \( T_\C \) of
\( K_\C \). The complex numbers \( \lambda_i \) combine to give the
complex-symplectic moment map for this complex torus action.  

It is often useful to consider the endomorphism
\begin{equation*}
  X = \alpha_{r-1} \beta_{r-1} \in \Hom (\C^n,\C^n),
\end{equation*}
which is invariant under the action of \( \tH_\C \) and transforms by
conjugation under the residual \( \SL(n,\C) \) action.

\section{Multiplicative diagrams}
\label{sec:multiplicative}
Let us now consider the multiplicative version of the quiver diagrams
above. That is, we consider the quasi-Hamiltonian 
moment map equations for
the action of $H_\C  = \prod_{i=1}^{r-1}\SL(n_i,\C)$. We refer to
\cite{AMM} for general background on quasi-Hamiltonian spaces.
A result of Van den Bergh \cite{VDB1}, \cite{VDB2}, shows that for
length one quivers
\[
  V \stackrel[\beta]{\alpha}{\rightleftarrows} W,
\]
with $1 + \alpha \beta, 1 + \beta \alpha$ invertible,
the natural $GL(V) \times GL(W)$ action 
\[
(\alpha, \beta) \mapsto (g_2 \alpha g_1^{-1}, g_1 \beta g_2^{-1})
\]
is quasi-Hamiltonian with 
group-valued moment map
\[
(\alpha, \beta) \mapsto (1 + \beta \alpha, (1 + \alpha \beta)^{-1}).
\]
For general quivers of the kind considered in the preceding section
we have an action of $\tilde{H}_{\C} \times GL(n_r, \C)
= \prod_{i=1}^{r} GL(n_i, \C)$. We let $M_{\rm mult} ({\bf n})$ denote
the space of such quivers, with dimension vector
$\bf n$, such that the endomorphisms $1+ \alpha_i \beta_i$
and $1 + \beta_{i} \alpha_i$ are invertible for each $i$.

We shall consider the reduced space by the action of $H_\C =
\prod_{i=1}^{r-1} SL(n_i, \C)$.
The equations the quiver has to satisfy are now
\begin{equation} \label{eq:mult}
(1 + \beta_{i+1} \alpha_{i+1}) = q_{i+1} (1 + \alpha_i \beta_i)  
\end{equation}
for free complex scalars $q_{i+1} \; (0 \leq i \leq r-2)$ . 
(See, for example, \cite{Y} for the associated equations
with $q_i$ fixed, that arise as moment maps for the
$\tilde{H}_\C$ action). Our invertibility conditions mean that the scalars
$q_i$ are all nonzero.

We remark that if all the $q_i$ equal $1$ then we get 
the same equations $\alpha_i \beta_i = \beta_{i+1} \alpha_{i+1}$
as in the additive case with $\lambda_i=0$.

We now define the multiplicative analogue of the hyperk\"ahler spaces
$Q$ of \S 2.

\medskip
\begin{definition}
Let $Q_{\rm mult}({\bf n})$ denote
the GIT quotient by the reductive group $H_\C$ of the 
space of solutions to (\ref{eq:mult}) in $M_{\rm mult}({\bf n})$.
\end{definition}

\begin{remark}
Notice that there is a residual action of 
$GL(n, \C) \times \tilde{H}_{\C}/H_{\C}$
on $Q_{\rm mult}({\bf n})$. In the
full flag case ${\bf n} = (1,2, \ldots, n)$ we may identify 
the complex torus  $\tilde{H}_{\C}/H_{\C} $ with
the maximal torus $T_\C$ in $SL(n, \C)$.
\end{remark}

\medskip
In the full flag case $Q_{\rm mult}({\rm n})$ will be a first approximation
to the multiplicative implosion. The true implosion will be a slight modification of this space, involving passing to a cover at a suitable 
stage of the construction.

\medskip
We now collect some useful results about the multiplicative quiver equations
(for general dimension vectors $\bf n$ unless otherwise stated).
We first consider the endomorphism $Y= 1 + \alpha_{r-1} \beta_{r-1}$. This
is (up to inversion) 
the value of the moment map for the residual $GL(n, \C)$ action
and is the multiplicative analogue of the
endomorphism $X = \alpha_{r-1} \beta_{r-1}$ mentioned above.
\begin{lemma}
$Y = 1 + \alpha_{r-1} \beta_{r-1}$ satisfies the equation
\[
(Y-1)(Y-q_{r-1}) \ldots (Y - q_{r-1} \ldots q_1) =0
\]
\end{lemma}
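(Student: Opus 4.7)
The plan is to use the intertwining identities
$\alpha_i(1 + \beta_i \alpha_i) = (1 + \alpha_i \beta_i)\alpha_i$ and
$\beta_i(1 + \alpha_i \beta_i) = (1 + \beta_i \alpha_i)\beta_i$
together with the moment map equations \itref{eq:mult} to peel off the
factors of the product one at a time. Write $Y_i = 1 + \alpha_i\beta_i$
(acting on $\C^{n_{i+1}}$) and $Z_i = 1 + \beta_i\alpha_i$ (acting on
$\C^{n_i}$), so that $Y = Y_{r-1}$, the intertwinings read
$\beta_i Y_i = Z_i \beta_i$, and \itref{eq:mult} becomes
$Z_{i+1} = q_{i+1} Y_i$ for $1 \le i \le r-2$. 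The $i = 0$ instance
of \itref{eq:mult} involves the non-existent $\alpha_0, \beta_0$ and
should be read as the boundary condition $Z_1 = q_1 I_{n_1}$.

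Next I would prove by induction on $k = 0, 1, \ldots, r-2$ the identity
\begin{equation*}
  \prod_{j=0}^{k}\bigl(Y - q_{r-1} q_{r-2} \cdots q_{r-j}\bigr)
  = c_k\, \alpha_{r-1}\alpha_{r-2}\cdots\alpha_{r-1-k}\,
    \beta_{r-1-k}\cdots\beta_{r-2}\beta_{r-1}
\end{equation*}
for some scalar $c_k$, with the convention that an empty product of
$q$'s equals $1$, so that the $j = 0$ factor is
$Y - 1 = \alpha_{r-1}\beta_{r-1}$ and the base case is immediate.
For the inductive step I multiply by the next factor
$(Y - q_{r-1}\cdots q_{r-1-k})$ on the right and pull
$\beta_{r-1}, \beta_{r-2}, \ldots, \beta_{r-1-k}$ past it in turn,
using $\beta_i Y_i = Z_i \beta_i = q_i Y_{i-1}\beta_i$. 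After $k+1$
such moves the polynomial lying between $\alpha_{r-1-k}$ and
$\beta_{r-1-k}$ is precisely
$q_{r-1}\cdots q_{r-1-k}(Y_{r-2-k} - 1)
= q_{r-1}\cdots q_{r-1-k}\,\alpha_{r-2-k}\beta_{r-2-k}$, extending
the chain by one more $\alpha\beta$-pair.

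Finally I would multiply the $k = r-2$ identity by the last remaining
factor $(Y - q_{r-1}\cdots q_1)$ and run the same $\beta$-pulling
procedure, now passing all of $\beta_{r-1}, \ldots, \beta_1$ across.
The final move is special: it invokes the boundary condition
$Z_1 = q_1 I$ rather than a further moment map equation, which
converts the operator sandwiched between $\alpha_{r-1}\cdots\alpha_1$
and $\beta_1\cdots\beta_{r-1}$ into the scalar
$q_{r-1}\cdots q_2 \cdot q_1 - q_{r-1}\cdots q_1 = 0$, and the whole
product vanishes. The one point of real content is this final
cancellation, which relies specifically on the $i = 0$ case of
\itref{eq:mult} (equivalently, on the absence of an incoming vertex at
the left end of the quiver); the rest of the proof is careful
bookkeeping of the descending indices and the telescoping products of
the $q_j$'s.
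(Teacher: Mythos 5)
Your argument is correct and is essentially the paper's proof in telescoped form: your inductive identity $\prod_{j=0}^{k}(Y - q_{r-1}\cdots q_{r-j}) = c_k\,\alpha_{r-1}\cdots\alpha_{r-1-k}\beta_{r-1-k}\cdots\beta_{r-1}$ is exactly the iterated version of the paper's recursion $X_k X = (q_{r-1}\cdots q_{r-k}-1)X_k + q_{r-1}\cdots q_{r-k}X_{k+1}$, proved by the same repeated use of \itref{eq:mult} to push factors down the quiver, with the convention $X_r=0$ playing the role of your boundary condition $Z_1 = q_1 I$.
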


\begin{proof}
We let $X_k = \alpha_{r-1} \alpha_{r-2} \ldots \alpha_{r-k}
\beta_{r-k} \ldots \beta_{r-2} \beta_{r-1}$ and $X = X_1 =
\alpha_{r-1} \beta_{r-1}$.

Using the equation repeatedly it is now easy to show
that
\[
X_k X = (q_{r-1} \ldots q_{r-k} -1) X_k + q_{r-1}\ldots q_{r-k} X_{k+1}
\]
for $1 \leq k \leq r-1$ (interpreting $X_r$ as $0$).
We deduce
\[
X(X+1 - q_{r-1})(X+ 1- q_{r-1}q_{r-2}) \ldots (X+1 - q_{r-1} \ldots q_1)
=0
\]
which on setting $Y=1+X$ yields the result. 
\end{proof}

In the case when all $q_i$ are $1$, then $Y$ lies in the unipotent
variety.

\begin{remark}
Let us observe that, using our equations, we have:
\[
\beta_i (1 + \alpha_i \beta_i -\tau) = q_i (1 + \alpha_{i-1} \beta_{i-1}
- \tau q_i^{-1} ) \beta_i 
\]
and
\[
(1+ \alpha_i \beta_i -\tau) \alpha_i = q_i \alpha_i 
( 1 + \alpha_{i-1} \beta_{i-1} -\tau q_i^{-1})
\]
It follows that $\alpha_j, \beta_j$ preserve the decomposition of the
quiver into subquivers given by generalised eigenspaces. Explicitly, we have
\begin{equation} \label{summand}
\ker (1 + \alpha_{i-1} \beta_{i-1} -\tau q_i^{-1})^m \stackrel[\beta_i]{\alpha_i}
{\rightleftarrows} \ker (1 + \alpha_{i} \beta_{i} -\tau)^m.
\end{equation}
Notice that 
\[
\ker (1 + \alpha_{i-1} \beta_{i-1} - \tau q_{i}^{-1})^m =
\ker (1 + \beta_i \alpha_i - \tau)^m
\]
using our equations (\ref{eq:mult}) and the fact that $q_i$ are nonzero.
So the maps $\alpha_i, \beta_i$ in (\ref{summand}) 
are isomorphisms unless $\tau = 1$.
\end{remark}

\begin{remark}
As in the additive case we see that $\tilde{H}_\C = 
\prod_{i=1}^{r-1} GL(n_i, \C)$ acts freely on a quiver if, for each $i$,
either $\alpha_i$ is injective or $\beta_i$ is surjective.
If for each $i$, both conditions hold, then the quiver is stable
for the $\tilde{H}_\C$ action.

If all $\alpha_i$ are injective or all $\beta_i$ are surjective, then
the quiver is stable for the $H_\C$ action.
\end{remark}

\begin{remark}
As in \cite{DKS-Seshadri} we can look at (full flag) quivers where the
$\alpha_k, \beta_k$ are of the special `toric' form:
\begin{equation} \label{toric} 
\alpha_k = \left( \begin{array}{ccccc}
\nu_1^k & 0 & 0 & \cdots & 0\\
0 & \nu_2^k & 0 & \cdots & 0\\
 & & \cdots & & \\
0 & \cdots & 0 & 0 & \nu_k^k\\
0 & \cdots & 0 & 0 & 0 \end{array} \right) \end{equation}
and
\begin{equation} \label{toric2}
\beta_k = \left( \begin{array}{cccccc}
\mu_1^k & 0 & 0 & 0 & \cdots & 0\\
0 & \mu_2^k & 0 & 0 & \cdots & 0\\
 & & \cdots & & &  \\
0 & \cdots & 0 & 0 & \mu_k^k & 0  \end{array} \right) 
\end{equation}
for some $\nu^k_i, \mu_i^k \in \C$.
Now $\alpha_k \beta_k$ and $\beta_{k} \alpha_k$ are diagonal for each $k$,
so our quiver equations are just the diagonal components of (\ref{eq:mult}).
In fact they are equivalent to
\[
\mu_j^i \nu_j^i = q_i \ldots q_j -1.
\]
Note that $Y = 1 + \alpha_{n-1} \beta_{n-1}$ will also be diagonal.
\end{remark}

\bigskip
Let us now focus on the full flag case, so $r=n$ and $n_i =i$ for each $i$.
If all $\beta_i$ are surjective, then  we may use the $H_\C \times SL(n, \C)$
action to put $\beta_i$ in the standard form $\beta_i = (0 \; I_{i \times i})$. 
We now find that $Y= 1 + \alpha_{n-1} \beta_{n-1}$ lies in the standard Borel
of $GL(n, \C)$, with diagonal entries
\begin{equation} \label{scalars}
1, q_{n-1}, q_{n-1} q_{n-2}, \ldots, q_{n-1} \ldots q_1.
\end{equation}
Using the equations, and the fact that $\beta_i$ are in
standard form one may work down the quiver finding the $\alpha_i$
successively from $Y$. Conversely every such $Y$ arises from a solution of the
equations. The invertibility condition on the endomorphisms 
$1 + \alpha_i \beta_i$ and $1 + \beta_i \alpha_i$ is equivalent to the
scalars $q_j$ all being nonzero.

The freedom
involved in putting the $\beta_i$ in this form is the action of $N$,
conjugating $Y$ and acting on $SL(n, \C)$ on the right.

Our space of quivers (with all $\beta_i$ surjective) 
satisfying the equation modulo $H_\C$ is therefore
\[
SL(n, \C) \times_N B_1
\]
where $B_1$ denotes the subgroup of the Borel in $GL(n, \C)$
consisting of elements with $1$ as the leading term on the diagonal.
The geometric quotient $SL(n, \C) \times_N B_1$ can therefore
be viewed as sitting inside $Q_{\rm mult}$ (in the full flag case)
as a quasi-affine variety.

If we let $B$ denote the Borel in $SL(n, \C)$, then we have a degree $n$
cover $\rho: B \rightarrow B_1$ given by dividing by the leading diagonal
term. More explicitly, if the diagonal entries of an element in $B$ are
$z_1, \ldots, z_n$ and the diagonal entries of the corresponding
element $Y$ in $B_1$ are $w_1, \ldots, w_n$ then 
\[
w_1 = 1, \;\;\; w_i = \frac{z_i}{z_1} \; (i=2, \ldots,n), \;\;\; 
z_{1}^{n} = (w_2 \ldots w_n)^{-1} = (\det Y)^{-1}.
\]

\medskip
As in the additive case, we may generalise the above discussion to the
case of a general quiver with dimensions $n_1 < n_2 < \ldots < n_r =n$.
The space of such quivers with all $\beta$ surjective may be identified with
\[
SL(n, \C) \times_{[P,P]} {\mathcal P}
\]
where $P$ denotes the parabolic associated to the flag with dimensions
$(n_1, \ldots, n_{r}=n)$. Moreover $\mathcal P$ denotes the subvariety
of $SL(n,\C)$ consisting of matrices with scalar blocks down the diagonal,
of size $k_j \times k_j$ where $k_j = n_{j+1} - n_j$, and with all entries
below these blocks being zero. The scalars for
the blocks are those given by (\ref{scalars}).

In the full flag case when $n_i=i$ for each $i$, then the parabolic $P$
is the Borel, the variety $\mathcal P$ is $B_1$,
 and we recover the earlier result.

\begin{example} \label{SU2q}
Let us consider the $SL(2, \C)$ case, so our quiver is just 
\[
 \C \stackrel[\beta]{\alpha}{\rightleftarrows} \C^2.
\]
Our invertibility conditions are just equivalent to
\[
1 + a_1 b_1 + a_2 b_2 \neq 0
\]
where $\alpha = 
\left( \begin{array}{c}
a_1 \\
a_2 
\end{array} \right)$ and
$\beta = \left( b_1 \;\; b_2 \right)$. As $H_\C = SL(1, \C)$ is trivial there
are no moment map equations in this case, and no quotienting.
 So the quiver space
is just the complement in
 $\C^4$ of the hypersurface $1 + a_1 b_1 + a_2 b_2 = 0$.
\end{example}

It is useful to consider a slight modification of the quiver equations
so that we deal with the Borel $B$ in $SL(n, \C)$ rather than the
group $B_1$. We achieve this by setting 
\[
q_{i+1} = \frac{\tilde{q}_{i+1}}{\tilde{q}_i} \;\;\; : \;\;\; i=0, \ldots, r-2
\]
subject to the constraint $\tilde{q}_0 \ldots \tilde{q}_{r-1} =1$. Our equations
(\ref{eq:mult}) now become
 \begin{equation} \label{eq:multQ}
\tilde{q}_i (1 + \beta_{i+1} \alpha_{i+1}) = 
\tilde{q}_{i+1} (1 + \alpha_i \beta_i)  
\end{equation}
and recovering the $\tilde{q}_j$ from the $q_j$ involves choosing an $n$th 
root of unity. In terms of the matrix $Y = 1 + \alpha_{n-1} \beta_{n-1}$
introduced above, we have $\tilde{q}_{n-1}^n = \det Y$, so recovering
our solutions from $Y$ involves a choice
of an $n$th root of $\det Y$, as in the
above discussion of the cover $\rho : B \mapsto B_1$.
  We then obtain the geometric quotient $SL(n, \C) \times_N B$
as a moduli space of quivers with all $\beta_i$ surjective, sitting inside
the full quiver moduli space as an open dense subset.

As remarked in the Introduction, Boalch \cite{Boalch1}
has obtained a quasi-Hamiltonian $G \times T_C$ structure
on $G \times_N B$, for a general complex reductive group $G$.
(In fact he more generally obtains a quasi-Hamiltonian $G \times L$ structure
on the geometric quotient
 $G \times_U P$, where $P$
is a parabolic subgroup with unipotent radical $U$
and Levi factor $L$).

\section{Nonreductive GIT quotients}

We now make contact with nonreductive GIT quotients following \cite{DK}. The quotient $X/\!/G$,  in the sense of geometric invariant theory (GIT), of an affine variety $X$ over $\C$ by the action of a complex reductive group $G$ is the affine variety $\mathrm{Spec}(\mathcal{O}(X)^G)$ associated to the algebra $\mathcal{O}(X)^G$ of $G$-invariant regular functions on $X$. This makes sense because the algebra $\mathcal{O}(X)^G$ is finitely generated, since $X$ is affine and $G$ is reductive. If we want to quotient an affine variety by a nonreductive group then difficulties can arise because the algebra of invariants is not necessarily finitely generated.  However \emph{if} the algebra of invariants is finitely generated then we can define the GIT quotient to be the affine variety associated to this algebra, just as for reductive groups. 

It is worth noting that the inclusion of $\mathcal{O}(X)^G$ in $\mathcal{O}(X)$ induces a natural $G$-invariant morphism from $X$ to $X/\!/G$. When $G$ is reductive this morphism is always surjective, and points of $X$ become identified in $X/\!/G$ if and only if the closures of their $G$-orbits meet in $X$. However when the group is not reductive this morphism is not necessarily surjective; indeed its image is in general not a subvariety of the GIT quotient but only a constructible subset \cite{DK}.

Recall that the universal symplectic implosion for a compact group $K$ can be identified with the nonreductive GIT quotient $K_\C/\!/N$ of the complexified group $K_C$ (which is a complex affine variety) by the action of its maximal unipotent subgroup $N$ \cite{GJS:implosion}. Here the algebra of invariants $\mathcal{O}(K_\C)^N$ is finitely generated although $N$ is not reductive. In fact $K_\C/\!/N$ is the canonical affine completion of the quasi-affine variety $K_\C/N$, which embeds naturally as an open subset of $K_\C/\!/N$ with complement of codimension at least two. The restriction map from $\mathcal{O}(K_\C/\!/N)$ to $\mathcal{O}(K_\C/N)$ is thus an isomorphism, and both algebras can be identified with the algebra of $N$-invariant regular functions on $K_\C$. In terms of the moment map description of the symplectic implosion, $K_\C/N$ corresponds to the open subset determined by the interior of the positive Weyl chamber for $K$.

Recall also from \cite{DKS} that in the additive case when $K=\SU(n)$ the universal hyperk\"ahler
implosion can be identified with the GIT quotient 
$(K_\C \times \n^\circ) \symp N$
by the nonreductive group $N$. (On choosing an invariant
inner product, the annihilator $\n^\circ$ may be identified with
the opposite Borel subalgebra $\bmf$). Just as for the action of $N$ on $K_\C$,  the algebra of invariants turns out to be finitely generated, and the GIT quotient is defined to be the corresponding affine variety. As the moment map for the right $K_\C$
action on $T^*K_{\C}$ is projection onto the Lie algebra factor, the
quotient $(K_\C \times \n^\circ) \symp N$
can be viewed as the complex-symplectic quotient in the GIT sense of
$T^*K_\C$ by $N$.

A natural multiplicative version of this starts with the double
$K_\C \times K_\C$ instead of the cotangent bundle $T^*K_\C$ \cite{AMM}. We have
an action of $K_{\C} \times K_\C$, given by
\[
(u,v) \mapsto (g_L u g_R^{-1}, g_R v g_R^{-1})
\]
with quasi-Hamiltonian moment map
\[
(\mu_L, \mu_R) : (u,v) \mapsto (uvu^{-1}, v^{-1}).
\]

By analogy with the additive case we consider
the nonreductive GIT quotient
\[
(K_\C \times B ) \symp N
\]
where $N$ acts on the right
\[
(u,v) \mapsto (u n^{-1}, n v n^{-1})
\]

If $K = SU(n)$, 
the argument in the additive case can be adapted 
to the present situation to show that the nonreductive GIT quotient
$(SL(n,\C) \times B_1) \symp N$ may be identified with the space 
$Q_{\rm mult}$ of
solutions to the quiver equation (\ref{eq:mult}) in the full flag case, 
modulo (in the GIT sense) 
the action of $H_\C = \prod_{i=1}^{r-1} SL(i, \C)$. For one shows
that the 
resulting quiver variety is an affine variety with
coordinate ring equal to the coordinate ring 
$\cO (SL(n, \C) \times B_1)^N$ of the variety
of surjective quivers $SL(n, \C) \times_{N} B_1$. This identification of the coordinate ring is obtained by showing that 
$SL(n, \C) \times_{N} B_1$ is an open subset of the affine variety $Q_{\rm mult}$ with complement of codimension at least two.
So 
$Q_{\rm mult} = (SL(n,\C) \times B_1) \symp N$ may be viewed as the canonical 
affine completion of the geometric quotient $SL(n, \C) \times_{N} B_1$.

Working instead with (\ref{eq:multQ}) gives the 
analogous result for $(SL(n, \C) \times B) \symp N$
(recall that $B$ is an $n$-fold cover of $B_1$). Our quasi-Hamiltonian
reduction $(SL(n, \C) \times B) \symp N$ (which in general may be singular) 
may be thus viewed as the canonical affine completion of the
smooth quasi-Hamiltonian space $SL(n, \C) \times_{N} B$ as discussed at the
end of \S 3.

\begin{definition}
The multiplicative universal complex-symplectic implosion
for $SL(n, \C)$ is $\tilde{Q}_{\rm mult} = (SL(n, \C) \times B) \symp N$, or equivalently
the GIT quotient by $H_{\C}$ of the space of full flag quivers satisfying
(\ref{eq:multQ}).
\end{definition}

The left $SL(n, \C)$ action on $SL(n, \C) \times SL(n, \C)$ descends
to the nonreductive GIT quotient $\tilde{Q}_{\rm mult}= (SL(n, \C) \times
B) \symp N$. We also have a residual right action of $T_\C = B/N$.

The left moment map $\mu_L$ defined above is
 an $N$-invariant map $(u,v) \mapsto uvu^{-1}$ which descends
to a map $\tilde{Q}_{\rm mult} \rightarrow SL(n, \C)$. We also have a map
$\psi : \tilde{Q}_{\rm mult} \rightarrow T_\C$ given by projecting
onto the diagonal in $B$. 

If we take the level set $\psi^{-1}(1)$ and reduce by $T_\C$ we
obtain the affine variety 
$(SL(n, \C) \times N ) \symp B$. This is actually the target space of the 
multiplicative Springer resolution
\[
SL(n, \C) \times_B N \mapsto {\mathcal U};
\]
that is, it is the unipotent variety $\mathcal U$. The multiplicative
Springer map is just $\phi: (u,v) \mapsto uvu^{-1}$. The identification
of $\mathcal U$ with $(SL(n,\C) \times N) \symp B$ is the well-known
fact that the Springer map is an affinisation map.

More generally, we can reduce via $\psi$ at a level $\lambda$ in $T_\C$.
We obtain the quotient $SL(n, \C) \times \lambda. N \symp B$. 

Our map 
gives a surjection of $SL(n, \C) \times_B \lambda.N$ 
onto the {\em Steinberg fibre} $F_{\lambda}$ which is the
variety of elements in $SL(n,\C)$ where the regular class functions take the 
same values as they do on the diagonal matrix with entries $\lambda$.

The Steinberg fibres are the multiplicative analogues of the
Kostant varieties. We recall the following facts (see \cite{EL} or
\S 6 of \cite{St}, for example) that hold for general complex
semisimple $K_\C$ :

(i) each Steinberg fibre $F_{\lambda}$ is a finite union of conjugacy classes.
The dimension of $F_{\lambda}$ is $\dim K_\C - {\rm rank \;} K_\C$.

(ii) the regular elements form a single conjugacy class which is open and
dense. This class is the smooth locus of $F_{\lambda}$. Its complement
in $F_{\lambda}$ has complex codimension at least 2.

(iii) the semisimple elements in $F_{\lambda}$ form a single conjugacy 
class, the unique
closed class in $F_{\lambda}$. This class is contained in the closure of
each class in $F_{\lambda}$.

The map $\phi: K_\C \times_B \lambda. N \rightarrow F_{\lambda}$ 
is a resolution of singularities and is an isomorphism
over a locus in the target space whose complement
has codimension at least 2. As in the additive case, we conclude that
$F_{\lambda}$ is the affinisation $(K_\C \times \lambda.N) \symp B$.

So the reduction of $\tilde{Q}_{\rm mult} =(SL(n, \C) \times B) \symp N$ at level $\lambda$
gives the Steinberg fibre.

\begin{remark}
We can ask whether the Steinberg fibre could also be viewed as the reduction of $SL(n, \C) \times_B N$ by $T_\C$ at level $\lambda$ in the sense of GIT, since it is the affine variety associated with the appropriate algebra of invariant regular functions on $SL(n, \C) \times_B N$. However geometric invariant theory does not behave well when applied to actions on quasi-affine varieties such as $SL(n, \C) \times_B N$ which are not affine, since quasi-affine varieties are not determined by their algebras of regular functions even when these are finitely generated. For a reductive group action on a quasi-affine variety $X$ a categorical quotient of an open subset $X^{ss}$ of $X$ is given in \cite{GIT} Thm 1.10, but this differs in general from the affine variety associated to the algebra of invariants and $X^{ss}$ does not necessarily coincide with $X$, in contrast with the case when $X$ is affine. 
\end{remark}

\begin{remark} When $Y$ is a hyperk\"{a}hler manifold with an action of $SU(n)$ which is Hamiltonian in the hyperk\"{a}hler sense, its hyperk\"{a}hler implosion is constructed in \cite{DKS} as the hyperk\"{a}hler quotient of the product of $Y$ with the universal hyperk\"{a}hler implosion $Q$; it has an induced action of $T$ which may be complexified with respect to any of the complex structures to an action of $T_\C$. 
Likewise, given a general space with quasi-Hamiltonian $SL(n, \C)$ action, we may
take its product with $\tilde{Q}_{\rm mult}$ to get a space with
$SL(n, \C) \times SL(n, \C) \times T_\C$ action, and perform fusion  (cf. \cite{AMM})
to obtain a space with $SL(n, \C) \times T_\C$ action. Reducing by
$SL(n, \C)$ then yields a space with $T_\C$ action.
\end{remark}

\begin{remark}
  If, for a general semisimple $K_\C$, we could show finite generation
  of the ring of $N$-invariants ${\cO(K_\C \times B)}^N$, then the
  nonreductive GIT quotient $(K_{\C} \times B) \symp N$ would exist as an affine
  variety, and the discussion of this section would go through for
  general $K_\C$. The analogous result in the additive case is known
  by work of Ginzburg-Riche (\cite{GinzburgRiche}), but we have not
  yet been able to adapt it to the multiplicative setting.
\end{remark}

\begin{example}
Let us return to the $SL(2, \C)$ example.

We are considering the quotient $(SL(2, \C) \times B) \symp N$,
where as usual $B$ is the standard Borel and $N$ the 
associated maximal unipotent.

Let us write the elements of $SL(2, \C)$ and $B$ as 
$\left(\begin{array}{cc}
a & b \\
c & d
\end{array} \right)$ and
$\left(\begin{array}{cc}
e & f \\
0 & e^{\prime}
\end{array}
\right)$
with relations
\[
ad-bc =1 \;\;\; : \;\;\; e e^{\prime} =1.
\]
The action of $\left(\begin{array}{cc}
1 & n \\
0 & 1
\end{array} \right)$ leaves $a,c,e$ and $e^{\prime}$ invariant
and transforms $b,d,f$ as follows:
\begin{eqnarray*}
b & \mapsto & b - an \\
d & \mapsto & d -cn \\
f & \mapsto & f + n(e^{\prime} -e)
\end{eqnarray*}
The invariants are generated by $a,c,e,e^{\prime}$ and
\[
x := af + (e^{\prime} -e) b, \;\;\; : \;\;\;
y := cf + (e^{\prime} -e)d
\]
with relations
\[
cx - ay = e - e^{\prime} \;\; : \;\; e e^{\prime} =1.
\] 
In terms of new variables $X = ex$ and $Y = ey$ we can rewrite this
as
\[
cX - aY = e^{2}-1 \;\;\; : \;\;\; e \neq 0
\] 
that is, an open set in the complex quadric in $\C^5$. Note this
can also be written as the double cover of the complement in $\C^4$
of the locus $cX - aY = -1$, which is compatible with the quiver
picture as in Example \ref{SU2q}
\end{example}

\begin{remark}
If we take the element of $B$ to be in the real maximal torus,
that is we take $e = \exp(i \theta), e^{\prime} = \exp(-i \theta), f=0$, then
the relation becomes
\[
cx - ay = 2 i \sin \theta.
\]
If we take, for example, $c =i  \bar{x}, a = -i \bar{y}$, then
we get a copy of the group-valued symplectic implosion $S^4$
inside our variety. We have a copy of $S^3$ for each $\theta \in (0, \pi)$
and these collapse to a point at the endpoints $\theta =0, \pi$.
\end{remark}

We can generalise this idea to produce a copy of the quasi-Hamiltonian
symplectic implosion inside our complex space.

Let us take the $q_i$ to lie in the unit circle. We choose a branch
of the square root function on the half plane $\Re z < 0$, and 
consider quivers of the toric form (\ref{toric}), 
(\ref{toric2}) where the entries are given by

\begin{equation}
\nu_{j}^{i} = \mu_j^{i} = \sqrt{q_i \ldots q_j -1} 
\end{equation}

For such quivers we recall that $Y = 1 + \alpha_{n-1} \beta_{n-1}$
is diagonal with entries given by (\ref{scalars}).
Setting a consecutive run of entries of $Y$ to be equal is
equivalent to setting a consecutive run of $q_j$ to be $1$.
If, say, $q_i = \ldots q_{i+m}=1$ for some $m \geq 0$, 
(so that $m+2$ consecutive entries
of $Y$ are equal) then the last diagonal
entry of $\beta_i$, the last two of $\beta_{i+1}$, and so on
up to the last $m+1$ of $\beta_{i+m}$, are zero, and similarly
for the corresponding  $\alpha$. This means that
the quiver decomposes according to the direct sum
$\ker \alpha_j \oplus \im \beta_j$, where $\ker \alpha_j = \ker \beta_{j-1}$
and $\im \beta_j = \im \alpha_{j-1}$. Moreover the quiver maps are zero
on $\ker \alpha_j$ and injective and surjective on the complement.

Let us now consider the sweep of such quivers under the action of
$K = SU(n)$. Now the action of $K \times H_{\C} = SU(n) \times
\prod_{i=1}^{n-1} SL(i, \C)$ preserves the scalars $q_i$, hence
if $K$ moves a quiver to another quiver of the same form, then the
two quivers must be the same modulo the action of $H_{\C}$. That is,
$g \in SU(n)$ has the same effect on the quiver as $(h_2, \ldots, h_{n-1}) 
\in H_\C$. 
The resulting equations 
\[
\alpha_j  = h_{j+1} \alpha_j h_j^{-1} \;\;\; (2 \leq j \leq n-2),
\]
\[
\alpha_1 = h_2 \alpha_1, \;\;\;   \;\;\; 
\alpha_{n-1} = g^{-1} \alpha_{n-1} h_{n-1}^{-1},
\]
together with their analogues for $\beta$, now imply that $(h_2, \ldots, h_{n-1})$ and $g$
preserve the above decomposition. We get that $g$ lies in the commutator
of the parabolic associated to the dimension flag of the injective/surjective
quiver. More precisely, the dimensions of the injective/surjective quiver
are $1,2,\ldots, i-1, i-1, \ldots, i-1, i+m+1, .\ldots, n$ and we take the
parabolic associated to the strictly increasing 
sequence $1,2, \ldots, i-1, i+m+1, \ldots,n$
obtained by collapsing the chain of equalities.
But as $g$ lies in the maximal compact subgroup we find that
$g$ lies in $SU(m+2)$, diagonally embedded in $SU(n)$.

The sweep of our quivers is now $SU(n) \times_{SU(m+2)} \mathcal Y$
where $\mathcal Y$ denotes the set of diagonal matrices $Y$
with $Y_{i-1} = \ldots Y_{i+m}$.
This picture now generalises in the obvious way to the case 
of general systems of equalities between elements of $Y$.

We obtain a space stratified by sets $\mathcal Y$ of diagonal matrices
with entries satisfying specified equalities. Each such
face gives a copy of $SU(n) \times_{SU(n_1) \times \ldots
\times SU(n_r)} \mathcal Y$. The open interior face $\mathcal Y$ where all
entries are distinct just gives $SU(n) \times \mathcal Y$.

Finally, we pass to the cover. We work on the fundamental
alcove 
\[
\theta_1 - 2 \pi \leq  \theta_n \leq \theta_{n-1} \leq \ldots
\theta_2 \leq \theta_1
 \]
in the Cartan algebra $\sum_{i=1}^{n} \theta_i =0$, 
and consider the corresponding elements
$(e^{i\theta_1}, \ldots , e^{i \theta_n})$ of $SU(n)$. The
map $\rho$ sends this element to
$(1, e^{i (\theta_2 - \theta_1)}, \ldots,
e^{i (\theta_3 - \theta_2)})$ in $B_1$ . The fibres of the map are obtained by
multiplying by scalar matrices in $SU(n)$, but
within the alcove this means only the vertices map to the
same point (the identity in $B_1$). 

So upstairs in the covering space we obtain
a stratified space stratified by the walls of the alcove. The strata
are the sets 
\[
SU(n) \times_{SU(n_1) \times \ldots \times SU(n_r)} \mathcal Y
\]
discussed above.
The open stratum is $SU(n) \times \mathcal Y$, and at the other extremes
the vertices just give points. We have obtained the quasi-Hamiltonian
symplectic implosion introduced in \cite{HJS} as a subset of our
complex quasi-Hamiltonian implosion.

\begin{example}
In the case of $SU(2)$, we take the alcove $\theta_1 - 2 \pi \leq
\theta_2 \leq \theta_1$ in the Cartan algebra $\theta_1 + \theta_2=0$
and the associated elements $\rm{diag} (e^{i \theta_1},
e^{i \theta_2})$ of the maximal torus in $SU(2)$. Equivalently, we take
$0 \leq \theta \leq \pi$ and $\rm{diag} (e^{i \theta},e^{-i \theta})$.
The covering map to $B_1$ sends this to $(1, e^{-2i \theta})$ and is injective
in the given range except that the scalar matrices $\pm I$, 
corresponding to $\theta=0, \pi$ both map to the identity in $B_1$.

We obtain the quasi-Hamiltonian symplectic implosion by taking
an open stratum $SU(2) \times (0, \pi)$ and then adding point strata at the 
endpoints, to obtain $S^4$ as in \cite{HJS}.
\end{example}

\end{document}